\newtheorem{theorem}{Theorem}[section]
\newtheorem{lemma}[theorem]{Lemma}
\theoremstyle{definition}
\theoremstyle{remark}
\numberwithin{figure}{section}
\numberwithin{table}{section}
\newcommand{\lra}{\longrightarrow}
\newcommand{\ra}{\rightarrow}
\providecommand{\abs}[1]{\lvert#1\rvert}
\providecommand{\norm}[1]{\lVert#1\rVert}
\newcommand{\Teich}{Teichm\"uller }
\newcommand{\Poin}{Poincar\'e }
\def\ra{{\rightarrow}}
\def\lra{{\longrightarrow}}
\def\S1g{{\Sigma_{g,1}}}
\def\T1g{{\mathcal T}_{g,1}}
\def\M1g{{MC}_{g,1}}
\def\I1g{{\mathcal I}_{g,1}}
\newcommand\Aut{\operatorname{Aut}}
\newcommand{\mb}{\mathbf}
\newcommand{\mc}{\mathcal}
\newcommand{\mf}{\mathfrak}
\newcommand{\fttw}[2]{\begin{array}{c}\vspace{-.05in}\includegraphics[#1]{#2}\end{array}}
\newcommand{\fttexw}[2]{\begin{array}{c}\resizebox{#1}{!}{\input{#2.pstex_t}}\end{array}}
\begin{document}

\title[Fatgraph Nielsen reduction and the chord slide groupoid]
{A Polygonal Perspective of Nielsen Reduction and the Chord Slide Groupoid$^\dagger$}

\thanks{$\dagger$ This short survey (minus minor corrections) was written for and published in the abstracts for the 57th Topology Symposium which took place in Okayama, JP \emph{2010}.  It summarizes the contents of a talk of the same title which was given by the author at that conference on \emph{8/12/2010}.  }

\author{Alex James Bene}
\address{IPMU, Tokyo University\\
5-1-5 Kashiwanoha\\
  Kashiwa 277-8568  \\
  Japan \\}
\email{alex.bene{\char'100}ipmu.jp}

\keywords{mapping class groups, Nielsen reduction,   fatgraphs, ribbon graphs, chord diagrams, Ptolemy groupoid,  }



\begin{abstract}
Nielsen reduction is an algorithm which decomposes any automorphism of a free group into a product of elementary Nielsen transformations.   While this may be applied to a mapping class of a surface $S_{g,1}$ with one boundary component, the resulting decomposition in general will not have a topological interpretation.  In this survey, we discuss a variation called fatgraph Nielsen reduction which decomposes such a mapping class into elementary Nielsen transformations interpreted as rearrangements of  polygon domains for $S_{g,1}$ described by  systems of arcs in $S_{g,1}$.  These elementary moves generate the chord slide groupoid of $S_{g,1}$,  which we survey and  describe in terms of generators and relations.

\end{abstract}

\maketitle


\section{Introduction}

Let us begin by briefly recalling Nielsen reduction for an automorphism of a free group  $F_n$ on $n$ free generators $\{x_i\}_{i=1}^n$.  An \emph{elementary Nielsen transformation} with respect to these generators is an automorphism of $F_n$ given by either permuting two generators: $x_i\mapsto x_j$, inverting some generator: $x_i\mapsto \bar x_i$, or multiplying  some generator by another: $x_i\mapsto x_ix_j$ for $j\neq i$.  \emph{Nielsen reduction} is an algorithm which  applies basic cancellation theory to decomposes every $f\in \Aut(F_n)$ into a product of these elementary transformations \cite{MKS}.  Roughly, the algorithm  proceeds by continuously applying elementary transformations which  reduce the total word length of the generating set $\{f(x_i)\}_{i=1}^n$ with respect to the $x_i$'s 
whenever possible,  with the final goal of obtaining $\{x_i\}_{i=1}^n$.   In the event that no length-reducing transformation is available, 
a lexicographical ordering of $F_n$ is used  to ensure progress is made  towards this final goal.  

Let $S_{g,1}$ be a surface of genus $g$ with one boundary component, and let $\pi=\pi_1(S_{g,1},p)$ be its fundamental group with respect to a basepoint $p\in \partial S_{g,1}$ on the boundary.  
The mapping class group $\mc{M}_{g,1}$ of $S_{g,1}$  acts on $\pi$ and can be identified with  the subgroup of $\Aut(\pi)$ which preserves the element  $\partial S_{g,1}\in \pi$ representing the boundary \cite{Zieschang}.
  Since $\pi\cong F_{2g}$ is a free group, every mapping class $\varphi\in \mc{M}_{g,1}$ can be decomposed via Nielsen reduction; however, this decomposition has no obvious topological interpretation.
 
In this short survey, we will discuss a variation of the above algorithm for mapping classes called fatgraph Nielsen reduction which was introduced in \cite{bene2}.  This algorithm  has a simple topological interpretation in terms of polygon domains of $S_{g,1}$ (see the next section) and in fact produces a sequence  of elementary moves, called CS moves, relating any two polygon domains of $S_{g,1}$, not only those differing by the action of a mapping class.  

Motivated by this application to Nielsen reduction, we introduce the \emph{chord slide groupoid} $\mf{CS}_{g,1}$,   the groupoid naturally generated by CS moves, which  can be thought of as a groupoid  laying somewhere ``between'' the mapping class group $\mc{M}_{g,1}$ and the full automorphism group $\Aut(\pi)$ (as discussed at the end of Section \ref{sect:CS}).  


This survey essentially summarizes the results of \cite{bene1} and \cite{bene2}, as well as some results of \cite{abp}.  However, the perspective taken here is different in that we do not emphasize the use of fatgraphs and chord diagrams, but rather choose to focus on the dual notions of triangulations and polygon domains of $S_{g,1}$.  In some ways, this perspective is the most natural and classical, and hopefully this will allow for these results to be accessible to a wider audience. 

\section{The chord slide groupoid}\label{sect:CS}

Instead of considering all generating sets of $\pi$, let us consider only those which can be topologically realized by $2g$ disjoint arcs in $S_g$ based at $p$.  We will call such a set a CG set\footnote{These initials stand for \emph{combinatorial generating set} as introduced in \cite{bene2}, although perhaps   \emph{topological generating set} may have been a better name.} and consider two CG sets equivalent if they are realized by the same collection of arcs.  Cutting along any such collection $\mb{X}=\{x_i\}_{i=1}^{2g}$ of arcs decomposes $S_g$ into a polygon $P_{\mb{X}}$ with $4g+1$ edges labelled by $\pi$ (see below), $4g$ of which are identified in pairs.  We call this a \emph{polygon domain} of $S_{g,1}$, and there is a 1-1 correspondence between (equivalence classes of) CG sets and polygon domains of $S_g$.  

When a particular polygon domain is assumed, we shall denote its oriented sides by $\{c_i\}_{i=0}^{4g}$, where the ordering and orientation is given by the  clockwise cyclic ordering of $\partial P$ with  $c_0=\partial S_{g,1}$.  See Figure \ref{fig:fan}.  We shall often abuse notation and confuse an oriented  side of $P$ with the corresponding oriented arc of a CG set or generator in $\pi$, so that we will often simply write $c_i\in \pi$.   An immediate observation is that for any polygon $P$ decomposition of $S_{g,1}$ with sides $\{c_i\}_{i=0}^{4g}$, we have
\begin{equation}\label{eq:product}
\prod_{i=0}^{4g} c_i=1, \quad \textrm{thus} \quad \prod_{i=1}^{4g} c_i = \overline{\partial S_{g,1}}, 
\end{equation}
where again we consider the boundary $\partial S_{g,1}\in \pi$ as an element of $\pi$ and we use a bar to denote an inverse in $\pi$ or the reversal of an orientation of an arc.  As each arc of a CG set $\mb{X}$  corresponds to two oriented sides of  $P_\mb{X}$,  we can (and will) canonically orient and order the arcs of  $\mb{X}$ according to their first appearance in $\partial P$. 

As a familiar example, a symplectic generating set $\{\alpha_i,\beta_i\}_{i=1}^{g}$ with $\prod_{i=1}^g [\alpha_i,\beta_i]=\partial S_{g,1}$ defines a polygon domain of $S_{g,1}$ with sides $c_1=\beta_{g}$, $c_2=\alpha_{g}$, $c_3=\bar \beta_g$, etc., identified in the ``standard'' way.  



Recall that a groupoid can be described as a category in which every morphism is an isomorphism.  
We define the \emph{chord slide groupoid}, denoted $\mf{CS}_{g,1}$, to be  the groupoid whose objects are copies of $\pi$, one for every equivalence class of CG set, and whose morphisms are the automorphisms of $\pi$ provided by taking one (canonically ordered) CG set to another.  
In this way, we have a groupoid morphism (functor)  $\mf{CS}_{g,1}\ra \Aut(\pi)$ which is neither 1-to-1 nor onto (neither faithful nor full), but does have trivial kernel.


 By the fundamental result of decorated \Teich theory, it is  known that $\mf{CS}_{g,1}$ is equivalent to a trivial groupoid, as it represents a discrete version of the fundamental path groupoid of a contractible space, the so-called decorated \Teich space of $S_{g,1}$ (see \cite{abp,penner,penner04}).  In other words, there is a unique morphism between any two objects of $\mf{CS}_{g,1}$.   Moreover, the mapping class group $\mc{M}_{g,1}$  acts freely on the chord slide groupoid via its action on (isotopy classes of) collections of arcs. Thus, the quotient $\mf{CS}_{g,1}/\mc{M}_{g,1}$ is a groupoid equivalent (as a groupoid) to the mapping class group $\mc{M}_{g,1}$ itself.  The advantage of the groupoid viewpoint presented here is that the generators and relations can be stated quite simply, as we shall see. 
    
\section{CS moves}

As the source and target of  every morphism of $\mf{CS}_{g,1}$ both correspond to a particular generating set for $\pi$, it makes sense to ask when a morphism of $\mf{CS}_{g,1}$ is an elementary Nielsen transformation.  Instead, let us ask the related question, when  a morphism  is a product of elementary transformations involving 
only one multiplication, so that up to permutation we have
\[
x_i\mapsto (x_i^{\pm1}x_j^{\pm 1})^{\pm 1} 	\quad \textrm{for some $i\neq j, $} \quad x_k\mapsto x_k^{\pm1} \quad \textrm{for $k\neq i,j$}.
\]

\begin{figure}[!h]
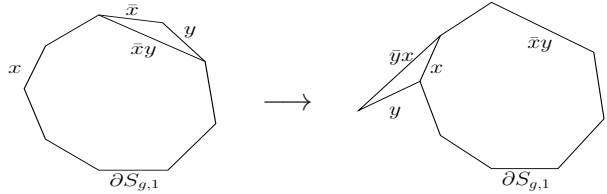

\begin{center}
\[  \begin{array}{c}\resizebox{1.1in}{1in}{\input{cut2.pstex_t}}\end{array} \quad  \lra  \quad   \begin{array}{c}\resizebox{1.3in}{1in}{\input{slide2.pstex_t}}\end{array}  \]
\caption{Triangle cut slide move.}
\label{fig:cutslide}
\end{center}
\end{figure}

It is easy to see when the answer to this new question is ``yes''.  It is exactly  when a  morphism is between CG sets which differ by the exchange of a single generator such that  the corresponding  polygon domains $P_1$ and $P_2$ are related  by cutting a triangle off of $P_1$ and reattaching it to another side of $P_1$ (according to the identification of sides of $P_1$).   See Figure \ref{fig:cutslide} where we depict the transformation $ x \mapsto \bar y x$.   We call such a move a \emph{triangle cut-slide move}, or simply a CS move for short. 

 It is not hard to see that a more general cut-slide move, where we cut off a larger polygon from $P_1$ and reattach it to another side, can be decomposed into triangle cut-slide moves. 
   In fact, we shall  soon see that CS moves generate all of $\mf{CS}_{g,1}$, but first we return to the topic of Nielsen reduction.

\section{Nielsen reduction}

We now  adapt Nielsen reduction to the case of polygon domains of $S_{g,1}$, with CS moves taking the place of elementary Nielsen transformations.  For this, it will be convenient to fix a particular symplectic generating set $\mc{S}=\{\alpha_i,\beta_i\}_{i=1}^{2g}$ 
to serve as our ``basepoint''  in $\mf{CS}_{g,1}$.  Let $P_{\mc{S}}$ be the corresponding polygon domain of $S_{g,1}$, and let us denote its (ordered) sides by $\{\sigma_i\}_{i=0}^{4g+1}$, so that $\sigma_1=\beta_g$, etc.

Now consider any other CG set with polygon domain $P$ and sides $\{c_i\}_{i=0}^{4g}$.  Our goal is to find a sequence of CS moves which will transform $\{c_i\}_{i=1}^{4g}$ to $\{\sigma_i\}_{i=1}^{4g}$.  As with classical Nielsen reduction, we first concentrate on word length and define the \emph{length} of $P$ by 
\[
\abs{P}=\sum_{i=1}^{4g} \abs{c_i}
\]
where $\abs{x}$ denotes the word length of $x\in \pi$ with respect to the letters $\{\sigma_i\}_{i=1}^{4g}$.  

We call a side $c_i$ of $P$ \emph{unbalanced} if more than half of it cancels with one of its neighbors, meaning either $c_i=\ell \bar y$ and $c_{i+1}= y r$ as reduced words with $\abs{y}>\abs{\ell}$, or  $c_{i-1}=\ell \bar x$ and $c_i=xr$ as reduced words with $\abs{x}>\abs{r}$.  It is immediate that if $c_i$ is unbalanced, a  CS move involving $c_i$ can reduce the word length of $P$.  
As $\abs{P}=4g$ if and only if $P=P_\mc{S}$ (due to the restriction \eqref{eq:product}), if we were always able to find an unbalanced side of $P$, we would have our algorithm for evolving $P$ into $P_\mc{S}$.  
 Unfortunately, we cannot always do this; however, we will always be able to find a CS move which makes progress towards $P_\mc{S}$ and is non-increasing in  length.
 
 
 We say that $c_i$ is balanced if $c_i=x\bar y$, $c_{i-1}=\ell \bar x$, and $c_{i+1}= y r$ as a reduced words with $\abs{x}=\abs{y}$, so that $c_i$ cancels equal amounts to the left and to the right.  Our choice of CG set $\mc{S}$ was motivated by the following fact (which we conjecture to be true for any CG set), whose proof we leave to the interested reader.
 \begin{lemma}
 If $P$ has no unbalanced or balanced side,  then $P=P_\mc{S}$.  
 \end{lemma}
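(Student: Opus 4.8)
The plan is to prove the statement directly: assuming $P$ has no unbalanced and no balanced side, I will show $\abs{P}=4g$, which by the equivalence recorded above ($\abs{P}=4g$ iff $P=P_\mc{S}$) gives $P=P_\mc{S}$. Write the word $w=c_1c_2\cdots c_{4g}$, which by \eqref{eq:product} reduces to $\overline{\partial S_{g,1}}=\sigma_1\sigma_2\cdots\sigma_{4g}$, a reduced word of length exactly $4g$. For $1\le i\le 4g-1$ let $r_i$ denote the length of the maximal cancellation between the reduced words $c_i$ and $c_{i+1}$, so that $r_i$ letters are deleted from the tail of $c_i$ and from the head of $c_{i+1}$ at that junction. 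First I would translate the hypothesis into a statement about these numbers: no side being unbalanced says $r_i\le\abs{c_i}/2$ and $r_i\le\abs{c_{i+1}}/2$ at every junction, and no side being balanced rules out the extremal case $r_{i-1}=r_i=\abs{c_i}/2$. Together these force, for every side, $r_{i-1}+r_i<\abs{c_i}$ (with the convention that the missing outer cancellation for $c_1$ and $c_{4g}$ is $0$); equivalently, every side retains a nonempty \emph{core}, the subword left after deleting the two cancelling ends.

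The second step is to read off the reduced form of $w$. Since each core is nonempty, the surviving middle of $c_i$ separates the cancellation zone it shares with $c_{i-1}$ from the one it shares with $c_{i+1}$, so \emph{no cancellation cascades}: reducing $w$ simply deletes the $r_i$ letters at each of the $4g-1$ junctions and nothing more. Hence the reduced form of $w$ is the left-to-right concatenation of the $4g$ cores, and its length is
\[
\abs{P}-2\sum_{i=1}^{4g-1}r_i=4g .
\]
As there are $4g$ cores, each of length $\ge 1$, summing to $4g$, every core has length exactly $1$, and reading them in order identifies them with the letters of $\overline{\partial S_{g,1}}$: the core of $c_i$ is $\sigma_i$. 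Thus each side has the shape $c_i=\overline{s_{i-1}}\,\sigma_i\,s_i$, where $s_i$ is the (possibly empty) cancelling tail of length $r_i$ and $\overline{s_i}$ is the head of $c_{i+1}$.

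It remains to show that all $r_i=0$, i.e. $\abs{P}=4g$. This is the crux, and here I expect to have to use a feature untouched so far: the $4g$ sides are glued in pairs, so there is a fixed-point-free involution $\tau$ with $c_{\tau(i)}=\overline{c_i}$ in $\pi$. (That the pairing is essential can be seen by noting that, ignoring it, one can write down reduced factorizations of $\overline{\partial S_{g,1}}$ with all cores of length $1$ yet with nonzero $r_i$; such tuples are simply not realized by any polygon domain.) For each identified pair, the reduced word $c_{\tau(i)}=\overline{c_i}=\overline{s_i}\,\overline{\sigma_i}\,s_{i-1}$ admits a second prefix--core--suffix decomposition $c_{\tau(i)}=\overline{s_{\tau(i)-1}}\,\sigma_{\tau(i)}\,s_{\tau(i)}$ coming from $\tau(i)$'s own neighbours. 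The plan is to compare these two decompositions across all pairs at once and derive a contradiction from any nonempty $s_i$, for instance by choosing an \emph{innermost} nontrivial cancellation and showing that its mirror image under $\tau$ forces the core of some side to sit inside a cancellation zone of a neighbour --- which, given that cores must survive and spell out $\sigma_1\cdots\sigma_{4g}$, can only happen if that neighbour is actually unbalanced or balanced, contrary to hypothesis. The genuine difficulty is that the core position of $c_{\tau(i)}$ is dictated by its own neighbours and need not line up with the mirror of the core of $c_i$; controlling this misalignment is where the planarity of the side-identifications (equivalently, the fatgraph and chord-diagram structure of \cite{bene2}) must enter, and it is the step I would expect to occupy the bulk of a complete proof.
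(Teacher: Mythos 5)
First, a caveat about the comparison you asked for: the paper does not actually prove this lemma --- it explicitly leaves the proof ``to the interested reader'' and only conjectures the analogous statement for general CG sets --- so there is no printed argument to measure yours against. Judged on its own, the first two thirds of your argument are correct and cleanly executed. ``Not unbalanced'' does give $r_{i-1}\le\abs{c_i}/2$ and $r_i\le\abs{c_i}/2$, ``not balanced'' kills the equality case, so every side retains a nonempty core; since the cores are nonempty the junction cancellations cannot cascade, so the reduced form of $c_1\cdots c_{4g}$ is the concatenation of the cores; and $4g$ nonempty cores of total length $4g$ forces each core to be a single letter, giving $c_i=\overline{s_{i-1}}\,\sigma_i\,s_i$ with the cores spelling out $\sigma_1\cdots\sigma_{4g}$. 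Your parenthetical observation that this is strictly weaker than the conclusion --- e.g.\ $c_1=\sigma_1$, $c_2=\sigma_2t$, $c_3=\bar t\sigma_3$, $c_4=\sigma_4$ satisfies every constraint derived so far but is realized by no polygon domain --- is exactly right and correctly identifies that the side-pairing must enter.

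However, the proposal is not a proof, because the step that actually yields the conclusion --- that all the $s_i$ are empty --- is only a plan. You propose to compare the two prefix--core--suffix decompositions of $c_{\tau(i)}=\overline{c_i}$ across the pairing involution $\tau$, select an innermost nontrivial cancellation, and derive a contradiction, while candidly conceding that the two cores of $c_{\tau(i)}$ need not align and that controlling this ``would occupy the bulk of a complete proof.'' I agree with that self-assessment: no argument is given for the crux. Worse, once the cores are known to be single letters, no side can be balanced (we automatically have $\abs{s_{i-1}}+\abs{s_i}<\abs{c_i}$), so the only remaining leverage from the hypothesis is the family of inequalities $\abs{s_i}\le\abs{s_{i\pm1}}+1$ together with $s_0=s_{4g}=\emptyset$, which your own example shows cannot close the argument without the involution. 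Everything therefore rests on the missing step. To complete the proof you would need to actually carry out the comparison --- for instance, starting from the minimal $i$ with $s_i\ne\emptyset$, where $c_i=\sigma_i t$ has length $2$ and its partner $\bar t\,\overline{\sigma_i}$ admits exactly two candidate decompositions, and propagating the resulting constraints through the pairing --- or replace it with a genuinely topological argument. As it stands, the submission proves a correct and useful intermediate statement, not the lemma.
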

 


Now, to make use of the above lemma, we need to introduce an \emph{energy function}  which lexicographically orders $\pi$.  
We define the \emph{energy} $\norm{x}$ of an element $x\in \pi$ by $\norm{\sigma_i}=i$ for $1\leq i \leq 4g$ and 
\[
\norm{w}=\norm{\prod_{j=1}^{\abs{w}}  \sigma_{i_j}}=\sum_{j=1}^{\abs{w}} (4g+1)^{\abs{w}-j}\norm{\sigma_{i_j}}
\]
for $w=\prod_{j=1}^{\abs{w}}  \sigma_{i_j}$ as a reduced word. 
Note that this indeed defines a lexicographical ordering on $\pi$ and that it extends the word length function in that $\abs{x}<\abs{y}$ implies $\norm{x}<\norm{y}$. We similarly define the energy of a polygon domain by 
\[
\norm{P}=\sum_{i=1}^{4g}\norm{c_i}.
\]

\begin{lemma}
If $c_i$ is a balanced side of $P$, then a triangle cut-slide move involving $c_i$ reduces the energy $\norm{P}$ of $P$.
\end{lemma}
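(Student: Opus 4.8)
The plan is to exhibit explicitly the two length-preserving CS moves available at a balanced side $c_i=x\bar y$ and to show, using the positional (base-$(4g+1)$) structure of the energy, that exactly one of them lowers $\norm{P}$.

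First I would fix the balanced configuration $c_i=x\bar y$, $c_{i-1}=\ell\bar x$, $c_{i+1}=yr$ as reduced words with $\abs{x}=\abs{y}=m$, and record the two triangle cut-slides involving $c_i$ that do not change $\abs{P}$: the right move $M_R$, which replaces the generator underlying $c_{i+1}$ by $c_ic_{i+1}=xr$ (so $yr\mapsto xr$, and at its inverse occurrence $\bar r\bar y\mapsto\bar r\bar x$), and the left move $M_L$, which replaces the generator underlying $c_{i-1}$ by $c_{i-1}c_i=\ell\bar y$ (so $\ell\bar x\mapsto\ell\bar y$, and $x\bar\ell\mapsto y\bar\ell$). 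Each alters both occurrences of a single generator by one multiplication, so each is a genuine CS move. I would first dispose of the degenerate cases: if forming $xr$ or $\ell\bar y$ produces any cancellation, or if $\abs{r}<m$ (resp.\ $\abs{\ell}<m$), then the companion move $c_i\mapsto c_ic_{i+1}$ (resp.\ $c_i\mapsto c_{i-1}c_i$) makes each altered side strictly shorter; since the energy dominates word length on each side, $\norm{P}$ then strictly decreases and we are done.

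In the remaining essential case $\abs{\ell},\abs{r}\ge m$ with no new cancellation, both $M_R$ and $M_L$ preserve $\abs{P}$, so I must compare energies at equal length. Writing $P_R,P_L$ for the resulting polygons and using $\norm{uv}=\norm{u}\,(4g+1)^{\abs{v}}+\norm{v}$ for a reduced concatenation, a direct computation over the two altered occurrences gives
\[
\norm{P_R}-\norm{P}=\delta\,(4g+1)^{\abs{r}}+\epsilon,\qquad \norm{P_L}-\norm{P}=-\bigl(\delta\,(4g+1)^{\abs{\ell}}+\epsilon\bigr),
\]
where $\delta=\norm{x}-\norm{y}$ and $\epsilon=\norm{\bar x}-\norm{\bar y}$. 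Since $c_i=x\bar y$ is reduced we have $x\neq y$, and because the energy is injective on reduced words of a fixed length, $\delta$ and $\epsilon$ are nonzero integers.

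The crux is the single estimate $\abs{\delta},\abs{\epsilon}<(4g+1)^m\le(4g+1)^{\abs{\ell}},(4g+1)^{\abs{r}}$, which holds because the energy of a length-$m$ word lies in $\bigl[\tfrac{(4g+1)^m-1}{4g},(4g+1)^m-1\bigr]$. This forces the leading term to dominate, so both $\delta(4g+1)^{\abs{\ell}}+\epsilon$ and $\delta(4g+1)^{\abs{r}}+\epsilon$ carry the sign of $\delta$; hence $\norm{P_R}-\norm{P}$ has the sign of $\delta$ while $\norm{P_L}-\norm{P}$ has the opposite sign, and since $\delta\neq0$ one of $M_R,M_L$ strictly lowers $\norm{P}$. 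I expect the main obstacle to be bookkeeping rather than conceptual: one must correctly track the change at \emph{both} occurrences of the modified generator (the second occurrence contributes only the lower-order term $\epsilon$, with the digit shift landing on the $\bar x$ versus $\bar y$ comparison), and one must check that every degenerate sub-case really reduces to a length decrease, so that the clean positional estimate is needed only in the genuine tie.
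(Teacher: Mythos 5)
Your proof is correct and takes essentially the same route as the paper: at a balanced side one compares the two candidate cut-slides and observes that the dominant positional term of the energy change, $\pm(\norm{x}-\norm{y})\cdot(4g+1)^{\abs{\ell}\ \text{or}\ \abs{r}}$, carries opposite signs for the two moves, so one of them must strictly decrease $\norm{P}$. You execute this with more care than the paper (the explicit bound $\abs{\epsilon}<(4g+1)^m\le(4g+1)^{\abs{\ell}},(4g+1)^{\abs{r}}$ and the degenerate cases are precisely what the paper hides behind ``it is not hard to see''); the only quibble is that in the sub-case where $xr$ cancels but $\abs{r}$ is large your ``companion move'' $c_i\mapsto xr$ need not shorten $c_i$ --- there the move $yr\mapsto xr$ already drops the length --- and in any case that sub-case is vacuous once $x,y$ are taken to be the maximal cancelling subwords, as the definition of balanced intends.
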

\begin{proof}
Let $c_i=x\bar y$ with $\abs{x}=\abs{y}$, $c_{i-1}=\ell \bar x$, and $c_{i+1}=y r$.  
If we cut off the triangle $\tau_{i-1,i}$ defined by $c_i$ and $c_{i-1}$ and reattach it to the side $\overline c_{i-1}$, then the CG set is changed by $\ell \bar x\mapsto \ell \bar y$ (and thus also $x\bar \ell \mapsto y\bar \ell$).  Similarly, if we cut off $\tau_{i,i+1}$ and attach it to $\overline c_{i+1}$, then we have $yr \mapsto xr$ (and also $\bar r \bar y \mapsto \bar r \bar x$).  Thus it is not hard to see that if 
$\norm{x}<\norm{y}$, then the first cut-slide move reduces the energy, while if $\norm{x}>\norm{y}$ then the second one does.  As one of these two inequalities must hold, we have our result.
\end{proof}

Collecting the above lemmas, we are able to describe our ``fatgraph'' Nielsen reduction algorithm: whenever possible, reduce the word length of $P$ by a CS move involving an unbalanced side.  When no unbalanced side exists, perform a CS move on a balanced side to reduce the energy.

\section{Presentation of  $\mf{CS}_{g,1}$}

Recall that a groupoid can also be defined as a partially composable set with inverses.  Viewing $\mf{CS}_{g,1}$ in this way, we see that a consequence of the fatgraph Nielsen reduction is the following lemma:
\begin{lemma}\label{lem:gen}
Triangle cut-slide moves generate $\mf{CS}_{g,1}$.
\end{lemma}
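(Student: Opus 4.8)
The plan is to read the statement off directly from the fatgraph Nielsen reduction algorithm of the previous section, with ``generate'' understood in the groupoid sense: every morphism of $\mf{CS}_{g,1}$ should be a finite composition of triangle cut-slide moves and their inverses. Since we already know from decorated \Teich theory that $\mf{CS}_{g,1}$ is equivalent to a trivial groupoid, there is a \emph{unique} morphism between any two objects. Consequently it suffices to exhibit, for every polygon domain $P$, a finite sequence of CS moves joining $P$ to the fixed basepoint $P_{\mc{S}}$: given two arbitrary objects $P$ and $P'$, concatenating such a sequence for $P$ with the reverse of one for $P'$ produces a composition of CS moves (and their inverses) from $P$ to $P'$, and by uniqueness this composition must coincide with the given morphism.

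First I would run the algorithm and argue that it terminates. To each polygon domain I assign the lexicographically ordered complexity pair $(\abs{P},\norm{P})\in \mathbb{Z}_{\geq 4g}\times \mathbb{Z}_{\geq 0}$. If $P$ has an unbalanced side, the CS move noted earlier strictly decreases the word length $\abs{P}$, hence strictly decreases the pair. If $P$ has no unbalanced side but does have a balanced side, then the energy-reduction Lemma furnishes a CS move that preserves $\abs{P}$ while strictly decreasing the energy $\norm{P}$, so the pair again strictly decreases. Because $\abs{P}$ is bounded below by $4g$ (each of the $4g$ sides contributes at least one letter) and $\norm{P}$ is a sum of non-negative integer values $\norm{c_i}$, the attainable pairs form a well-ordered set, so the process halts after finitely many CS moves.

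Next I would identify the terminal state: when the algorithm halts, $P$ has neither an unbalanced nor a balanced side, and by the Lemma characterizing $P_{\mc{S}}$ this forces $P=P_{\mc{S}}$. Thus every object of $\mf{CS}_{g,1}$ is joined to the basepoint by a finite chain of triangle cut-slide moves, which by the reduction above yields the desired factorization of every morphism, completing the argument.

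The main obstacle is the termination bookkeeping rather than any deep structural input: one must confirm that the two families of moves interact correctly under the lexicographic order. The delicate point is that a balanced (energy-reducing) move must be genuinely \emph{length-preserving}, so that $\abs{P}$ and $\norm{P}$ are controlled in the correct priority; this is precisely what is verified in the computation behind the energy-reduction Lemma. Granting that, $(\abs{P},\norm{P})$ descends strictly at every step, and the well-foundedness of the lexicographic order on $\mathbb{Z}_{\geq 4g}\times\mathbb{Z}_{\geq 0}$ drives the whole proof.
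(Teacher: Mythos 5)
Your proposal is correct and is essentially the route the paper itself takes for this lemma: the paper states it precisely as ``a consequence of the fatgraph Nielsen reduction,'' and your write-up supplies the termination bookkeeping (the lexicographic descent of $(\abs{P},\norm{P})$, which works because the energy extends word length, so an energy-reducing move on a balanced side cannot increase $\abs{P}$) together with the reduction to connectivity via the uniqueness of morphisms between objects of $\mf{CS}_{g,1}$. The paper additionally records a second, ``alternate'' proof that instead invokes Whitehead's classical theorem that any two triangulations are related by diagonal exchanges and transports these to CS moves via the greedy algorithm and the locality lemma, but your argument stands on its own and matches the paper's primary justification.
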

In this section, we shall give an alternative proof of this, as well as a description of the relations of $\mf{CS}_{g,1}$.  To this end, we begin by noting that any polygon domain $P$ of $S_{g,1}$ can be canonically extended to a triangulation $T(P)$ of $S_{g,1}$ (based at $p$) by triangulating the polygon in a fan-like fashion as depicted in  Figure \ref{fig:fan}a.

\begin{figure}[!h]
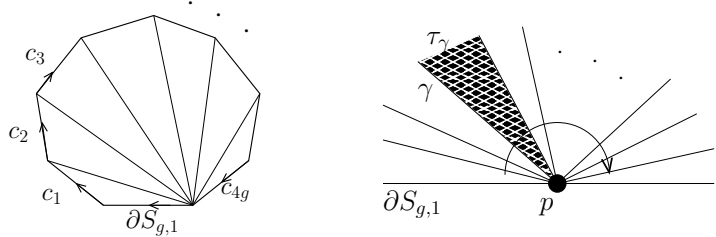

\begin{center}
\[  \fttexw{1.33in}{fan} \quad \quad \quad  \fttexw{1.75in}{arcsatp}   \]
\caption{a) Fan triangulation and b) Arcs based at $p$ with $\tau_\gamma$ to the right of $\gamma$.}
\label{fig:fan}
\end{center}
\end{figure}

Conversely, an inverse to this fan-like triangulation is given by the following greedy algorithm \cite{abp}:  Given a triangulation $T$ of $S_{g,1}$ based at $p$,  we canonically order the arcs of $T$  according to their first appearance in the clockwise ordering at $p$ (see Figure \ref{fig:fan}b).
In this ordering, we consecutively remove every arc from this collection as long as the compliment of the remaining arcs  in $S_{g,1}$  consists of a union of polygons.  This is clearly an inverse of the fan-like triangulation.  Moreover, we have the following ``locality'' result: 
\begin{lemma}\label{lem:locality}
Given an arc $\gamma$ in a triangulation $T$, let $\tau_\gamma$ be the triangle lying to the right of $\gamma\in T$  at its first occurrence in the ordering of arcs at $p$ (see Figure \ref{fig:fan}b), and let $v$ be the sector of $\tau_\gamma$ opposite to $\gamma$.  Then $\gamma$ is removed during the greedy algorithm if and only if $v$ precedes $\gamma$ in the clockwise order at $p$. 
\end{lemma}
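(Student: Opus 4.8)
The plan is to translate the greedy algorithm into the language of its dual graph, recognize it as a spanning-tree (Kruskal-type) procedure, and then show that the spanning-tree decision---which is normally global---is in fact detected locally by the position of the opposite corner $v$. First I would set up the dual graph $G^*$ whose nodes are the triangles of $T$ and whose edges are the arcs of $T$, each arc separating two triangles (the boundary side $c_0$ treated separately). Deleting an arc from the current collection merges the two faces it separates, and this keeps the complement a union of polygons exactly when those two faces are \emph{distinct} disks: merging two disks along a single edge yields a disk, whereas erasing a wall with the same face on both sides produces a non-disk. Hence, processing the arcs in the clockwise first-occurrence order at $p$, the greedy algorithm removes $\gamma$ precisely when, at that moment, the right triangle $\tau_\gamma$ and the left triangle $\tau'_\gamma$ lie in different current faces. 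In dual terms this is exactly Kruskal's algorithm for this edge order: $\gamma$ is removed iff it is a spanning-tree edge of $G^*$, and kept iff it closes a cycle.

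Next I would pin down where $v$ sits. The corner $v$ opposite $\gamma$ in $\tau_\gamma$ lies at the far junction of the two sides $a,b$ of $\tau_\gamma$ other than $\gamma$, i.e.\ away from the corners flanking $\gamma$ at its first end. Thus the hypothesis that $v$ \emph{precedes} $\gamma$ is precisely the statement that, in the clockwise sweep of the sectors at $p$, the far corner of $\tau_\gamma$ is traversed before $\gamma$'s first occurrence---equivalently, that the $v$-ends of $a$ and $b$ both occur before $\gamma$. This is the purely local datum that must be converted into the global same-face/different-face dichotomy above.

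The engine is an induction along the clockwise order at $p$, carrying the invariant that after processing all arcs whose first occurrence precedes the current sweep position, the removed arcs have merged the already-swept triangles into polygons, among which there is a single distinguished \emph{open} face adjacent to the not-yet-swept sectors, whose boundary word is read off by the portion of the order already traversed; a triangle is sealed out of this open face exactly when the sweep passes its last corner. Granting this, when $\gamma$ is processed the left triangle $\tau'_\gamma$ still lies in the open face. If $v\prec\gamma$, the opposite corner of $\tau_\gamma$ has already been passed, so $\tau_\gamma$ has been sealed into a completed polygon distinct from the open face, the two sides of $\gamma$ lie in different faces, and $\gamma$ is removed; if $v$ lies ahead, $\tau_\gamma$ is still fused to $\tau'_\gamma$ within the open face, so deleting $\gamma$ would pinch a single polygon and $\gamma$ is kept. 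The handedness of ``right'', the sweep direction, and hence the precise sense of ``precedes'' are to be fixed to agree with Figure~\ref{fig:fan}b so that the biconditional lands in the stated direction.

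I expect the main obstacle to be establishing the invariant of the previous paragraph, which is exactly the ``locality'' the lemma asserts: that the running open face is always a single polygon controlled by the already-seen arcs, so that the verdict on $\gamma$ depends only on whether $\tau_\gamma$'s opposite corner has been swept and never on distant removals. The delicate points are verifying that every merge stays within the disk world (no two current faces ever come to share two edges, which would manufacture a handle and break the ``union of polygons'' condition) and handling the boundary side $c_0$ together with the two endpoints of the linear order at $p$. Once the invariant is secured, the lemma follows by reading off the two cases above.
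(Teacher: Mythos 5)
Your overall strategy---induct along the clockwise order at $p$, track how the complementary faces merge as arcs are removed, and reduce the global ``do the two sides of $\gamma$ lie in different current faces'' test to a local condition at $\tau_\gamma$---is the same one the paper uses, but the paper's invariant is different from yours, and yours does not support your key inference. The paper's observation is that at every stage of the greedy algorithm the complementary regions consist of a \emph{single} growing polygon containing $\partial S_{g,1}$ as a side together with original, untouched triangles, and that every removal attaches one such untouched triangle to that growing polygon; the verdict on $\gamma$ then depends only on whether $\tau_\gamma$ has already been absorbed when $\gamma$ is processed. Your invariant instead allows several ``completed polygons'' that have been ``sealed'' away from a distinguished open face, with sealing occurring ``when the sweep passes the last corner'' of a triangle. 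Even granting that invariant, the case analysis misapplies it: from ``$v$ precedes $\gamma$'' you conclude that $\tau_\gamma$ has been sealed, but $v$ is only one of the three corners of $\tau_\gamma$, and another of its corners sits immediately adjacent to the first occurrence of $\gamma$ itself, so it is being swept only now---passing $v$ is not passing the \emph{last} corner. Hence the step ``$v$ precedes $\gamma$ $\Rightarrow$ the two sides of $\gamma$ lie in different faces'' does not follow from what you set up, and symmetrically, in the other case you assert that $\tau_\gamma$ is ``still fused to $\tau'_\gamma$ within the open face'' without deriving it.

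The deferral of the handedness is also not a cosmetic loose end: the entire content of the lemma is which way the biconditional goes, and your two cases are exactly where that must be decided. Note in this connection that the paper's own proof ends by asserting that the attachment ``can happen for $\gamma$ if and only if $v$ does \emph{not} precede $\gamma$''---the opposite-looking condition to the one in the statement---so you should push the orientation conventions of Figure \ref{fig:fan}b through a small example (say, the fan triangulation of the standard genus-one pentagon) before committing to either sign. Your dual-graph/Kruskal reformulation of the greedy algorithm is correct and is a clean way to say ``removed iff the two adjacent faces are currently distinct,'' but everything after that needs the corrected invariant: one growing boundary polygon absorbing one untouched triangle per removal, with the local test being whether $\tau_\gamma$ is still untouched.
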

\begin{proof}
Whenever an arc is removed during the greedy algorithm, its complement in $S_{g,1}$ changes by attaching a  triangle to  the current polygon containing $\partial S_{g,1}$ as a side.  This can happen for $\gamma$ if and only if $v$ does not precede $\gamma$.   
\end{proof}

\begin{figure}[!h]
\begin{center}
\includegraphics[width=1.8in]{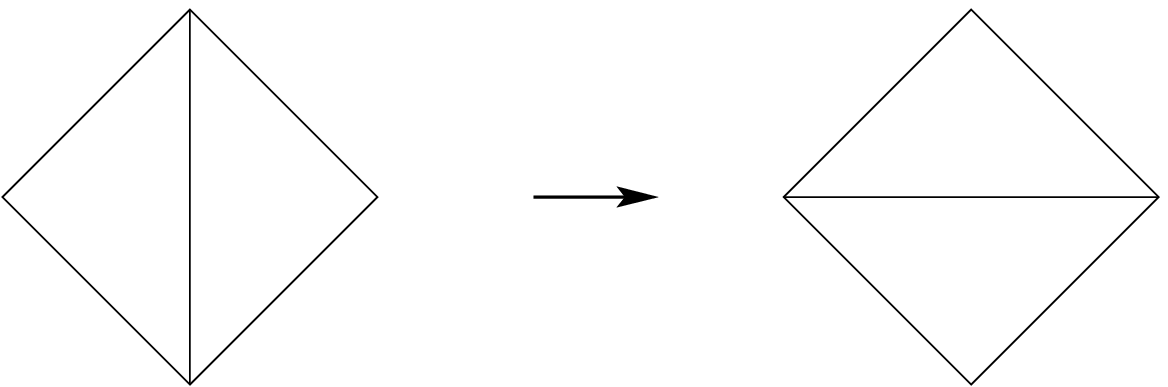} $\quad\quad\quad $  \includegraphics[width=1.7in]{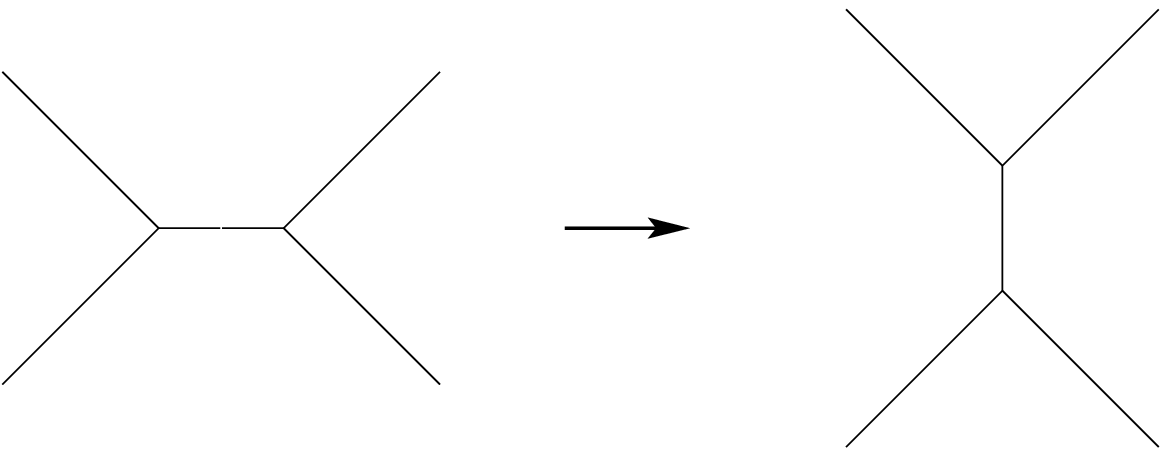}
\caption{a) A Diagonal exchange. b)  A Whitehead move.}
\label{fig:diagx}
\end{center}
\end{figure}

We are now ready to give our alternate proof of Lemma \ref{lem:gen}.
\begin{proof}[Alternate proof of Lemma \ref{lem:gen}]
The proof relies on  the classical result of Whitehead which states that any two triangulations of $S_{g,1}$ are related by a sequence of  elementary diagonal exchanges (see Figure \ref{fig:diagx}a), where an arc is removed and replaced by the opposite diagonal in the resulting quadrilateral.  
Note that a triangle cut-slide move can be realized by (usually) two or (sometimes) one diagonal exchange on the corresponding fan-like triangulation of $S_{g,1}$.  
We essentially need  to show the converse.  

Given any two polygon domains $P$ and $P'$ of $S_{g,1}$, we can canonically triangulate each to obtain triangulations $T$ and $T'$. By Whitehead's result, $T$ and $T'$ are related by a sequence of diagonal exchanges: $T=T_0\ra T_1\ra \dotsm \ra T_n=T'$.  Let $P_i=P(T_i)$ denote the $i$th  polygon domain corresponding to $T_i$ via  the greedy algorithm so that  $P_0=P$ and $P_n=P'$.  Lemma \ref{lem:locality} can be interpreted as saying the inclusion of an arc in $T_i$ as a side of $P_i$ is a ``local'' property; thus, as $T_{i+1}$ differs from $T_{i}$ by the replacement of a single arc, $P_{i+1}$ can differ from $P_i$ only by the replacement of a single arc.  
As we have already noted,  such a move can be realized by a sequence of CS moves;  thus,  we have our result.
\end{proof}

Once we know how to generate $\mf{CS}_{g,1}$, it is natural to ask what relations the groupoid satisfies.  
Some relations are immediate, such as following a CS move by its inverse, which reattaches the cut triangle to its original position.  
We call this the \emph{involutivity} relation $I$.  Similarly, cut-slide moves for  non-adjacent triangles (which remain non-adjacent after sliding) are easily seen to commute with each other, and we call this the \emph{commutativity} relation $C$.  Also, it is not hard to see that we have the following   \emph{triangle} relation $T$ where a triangle $\tau_{x,y}$ with sides $x$ and $y$ is cut off and attached to $\bar x$, then cut off again and attached to $\bar y$, then cut off again and reattached in its original position.

\begin{figure}[!h]
\begin{center}
\[
 \fttw{width=0.93in}{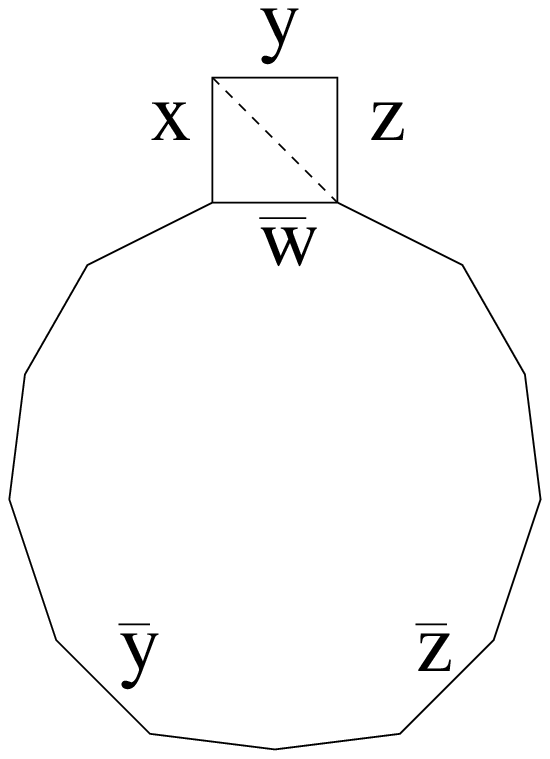}
\ra \fttw{width=0.93in}{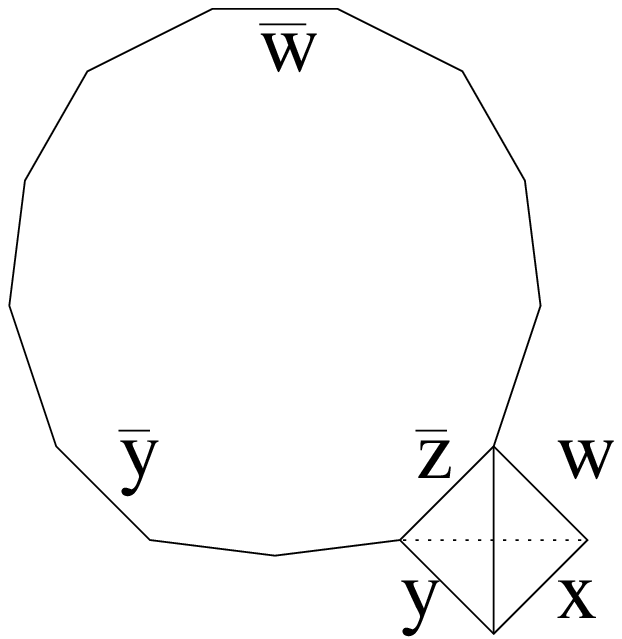}
\ra \fttw{width=0.93in}{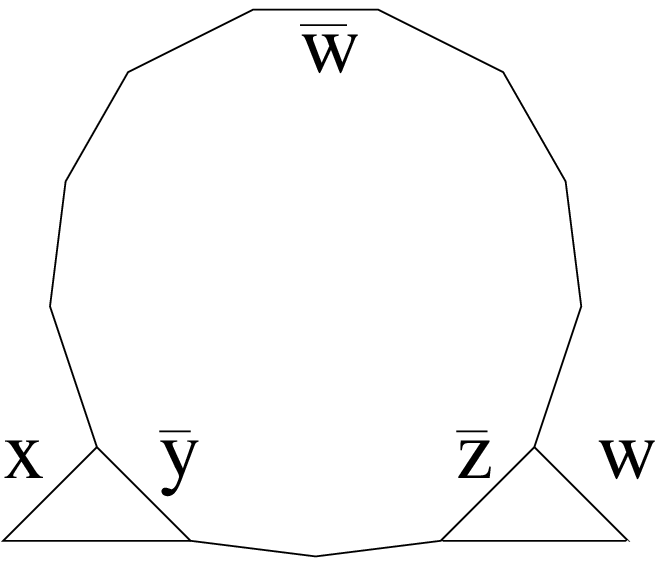}
\ra \fttw{width=0.93in}{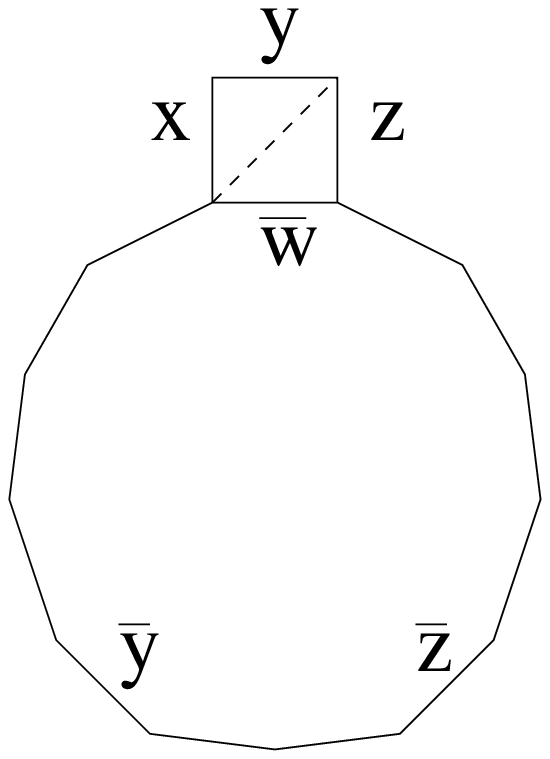}
\]
\caption{Left pentagon relation.}
\label{fig:L}
\end{center}
\end{figure}

Finally, we have two relations  which involve cutting and sliding two adjacent triangles.  The \emph{left pentagon} relation $L$ is depicted in Figure \ref{fig:L}.  The \emph{right pentagon}  $R$ is defined analogously. 


\begin{theorem}[\cite{bene1}]
$I$, $C$, $T$, $L$, and $R$ generate all relations in $\mf{CS}_{g,1}$.  
\end{theorem}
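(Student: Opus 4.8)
The plan is to transport the well-known presentation of Penner's Ptolemy groupoid of triangulations to the polygon-domain setting, using the fan triangulation and greedy algorithm already set up in the proof of Lemma \ref{lem:gen}. Recall that the Ptolemy groupoid $\Pt$ has as objects the (isotopy classes of) triangulations of $S_{g,1}$ based at $p$ and as morphisms the sequences of diagonal exchanges; by the fundamental results of decorated \Teich theory (cf.\ \cite{penner,abp}) it admits a \emph{complete} presentation in which the flips are the generators and the relations are precisely involutivity (a flip followed by itself is trivial), commutativity (flips on arcs not sharing a triangle commute), and the pentagon relation. This completeness is exactly the statement that the associated flip complex is simply connected, which in turn follows from the contractibility of decorated \Teich space already invoked in Section \ref{sect:CS}. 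Soundness of $I$, $C$, $T$, $L$, $R$ having been observed by hand, the entire content of the theorem is the \emph{completeness} of this list, and the strategy is to deduce it from completeness for $\Pt$.

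First I would make precise the functor $F\colon \mf{CS}_{g,1}\to \Pt$ induced by $P\mapsto T(P)$, together with its section $T\mapsto P(T)$ furnished by the greedy algorithm. As recorded in the proof of Lemma \ref{lem:gen}, $F$ carries each triangle cut-slide move to either one or two flips on the fan triangulation, and Lemma \ref{lem:locality} identifies precisely which arcs of $T(P)$ appear as sides of $P$. The key consequence is that any relation in $\mf{CS}_{g,1}$ -- a closed loop of CS moves based at some polygon domain -- is sent by $F$ to a closed loop of flips, which by the presentation of $\Pt$ factors as a product of involutivity, commutativity, and pentagon relations based along the lifted path.

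The heart of the argument is then to pull each of these three Ptolemy relations back down to $\mf{CS}_{g,1}$ and verify that it is a consequence of $I$, $C$, $T$, $L$, and $R$. Involutivity of a flip pulls back to the involutivity relation $I$, and commutativity of disjoint flips pulls back to the commutativity relation $C$. The pentagon is the interesting case: according to how the five participating arcs are situated with respect to the clockwise ordering at $p$, a single Ptolemy pentagon descends either to the left pentagon relation $L$ (Figure \ref{fig:L}) or to the right pentagon relation $R$. The auxiliary triangle relation $T$ is exactly what is needed to reconcile the two possible fan orderings of a pair of adjacent arcs, that is, to account for the reorganization of the greedy output that occurs when an arc passes between ``fan-interior'' and ``polygon-boundary'' status.

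I expect the main obstacle to be precisely this bookkeeping at the interface between flips and CS moves. Since $F$ is neither injective on morphisms nor does it send CS moves to single flips, one must track, step by step along a lift and using Lemma \ref{lem:locality}, how the set of polygon sides evolves across each intermediate flip, and show that every ``non-CS'' flip configuration arising in such a lift can be routed back into the image of $F$ using $I$, $C$, $T$, $L$, $R$. Establishing this case analysis -- and in particular checking that no \emph{additional} independent relation is forced by the freedom in choosing a lift -- is the technical crux. Once it is in place, completeness of the Ptolemy presentation transfers directly to completeness of $I$, $C$, $T$, $L$, $R$, which is the assertion of the theorem.
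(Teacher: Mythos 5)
Your proposal follows essentially the same route as the paper: both reduce the completeness question to Penner's presentation of the Ptolemy groupoid $\mf{Pt}_{g,1}$ by involutivity, commutativity, and the pentagon relation, and then use the fan triangulation and greedy algorithm (with the locality lemma) to translate each incarnation of a Ptolemy relation into a product of $I$, $C$, $T$, $L$, and $R$. You also correctly identify where the real work lies, namely the case-by-case bookkeeping at the interface between flips and cut-slide moves, which is exactly the ``technical'' part the paper defers to \cite{bene1}.
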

\begin{figure}[!h]
\begin{center}
\[
 \fttw{width=0.55in}{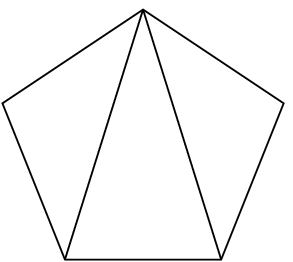}
\ra \fttw{width=0.55in}{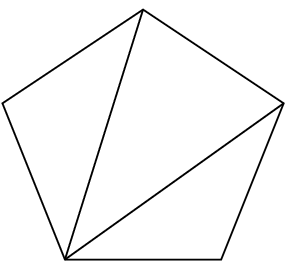}
\ra \fttw{width=0.55in}{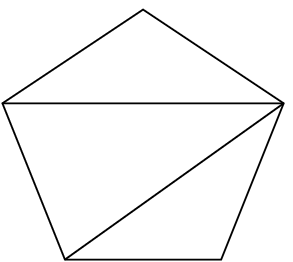}
\ra \fttw{width=0.55in}{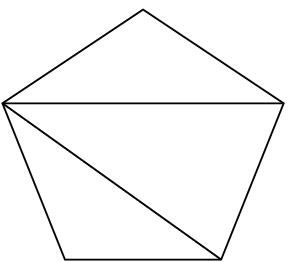}
\ra \fttw{width=0.55in}{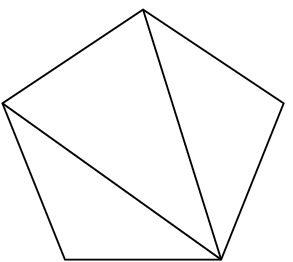}
\ra \fttw{width=0.55in}{P1.eps}
\]
\caption{Pentagon relation.}
\label{fig:pentagon}
\end{center}
\end{figure}
\begin{proof}
The proof is a bit technical and involves many cases, but the main idea is simple.  It relies on the results of decorated \Teich theory \cite{penner} which tell us precisely what the relations are for the  analogous groupoid generated by diagonal exchanges, the so-called \emph{Ptolemy groupoid} $\mf{Pt}_{g,1}$ \cite{Penner03}.  All relations in $\mf{Pt}_{g,1}$ are generated by three types of relations: the obvious involutivity relation where a diagonal exchange is followed by its inverse, the commutativity relation involving diagonal exchanges on non-adjacent arcs, and the famous \emph{pentagon relation} which is depicted in Figure \ref{fig:pentagon}.  

The proof then proceeds by doing a careful analysis between diagonal exchanges and cut-slide moves via the greedy algorithm to rewrite (the image under the greedy algorithm of) each possible incarnation of a relation in $\mf{Pt}_{g,1}$ as a product of the relations $I$, $C$, $T$, $L$, and $R$.
\end{proof}

\section{Fatgraphs and Chord Diagrams}

While we have so far focused on the perspective of arc systems and polygon domains, there is a dual perspective which is worth briefly mentioning (and in fact is the perspective taken in \cite{abp,bene1,bene2}).  Given any (non-degenerate) arc system $Y=\{y_i\}$ based at $p$ which cuts $S_{g,1}$ into a number of polygon components, we define the \Poin dual graph $G_Y$ of $Y$  
to be the graph embedded in $S_{g,1}$ which has one vertex for every component of $S_{g,1}\backslash Y$ 
and  one edge $e_i$ for every arc $y_i$ of $Y$ such that $e_i$ intersects $y_j$ if and only if  $i=j$. 
 In particular, if $T$ is a triangulation, $G_T$ is a trivalent graph, and if $X$ is a CG set, $G_X$ is a ``rose'' graph with only one vertex.  Note that every oriented edge of the  graph $G_Y$ is colored by an element of $\pi$ in a natural way (after  choosing an  orientation of $S_{g,1}$).

In fact, this dual graph inherits some  additional structure, as the orientation of the surface induces a cyclic ordering of half-edges incident to every vertex.  We call such a vertex-oriented graph a  \emph{fatgraph}.  It is convenient (for technical reasons)  to consider the boundary $\partial S_{g,1}$ to be an arc included in every arc system, in which case  \Poin duality will produce \emph{bordered fatgraphs}:  fatgraphs with a special edge called the \emph{tail} whose univalent endpoint lies on the boundary   $t\neq p \in \partial S_{g,1}$.  (See Figure \ref{fig:chordslide}.) 
\begin{figure}[!h]
\begin{center}
\[\; \quad \fttexw{1.4in}{cd0} \; \quad  \ra \;  \quad \fttexw{1.6in}{cd5}   \]  
\[
 \fttw{width=2.25in}{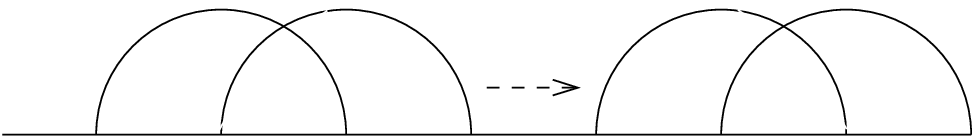} \ra   \fttw{width=2.25in}{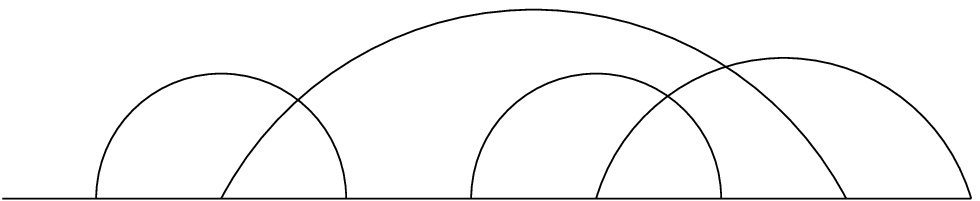} 
\]
\caption{Chord diagram  and corresponding  fan-like triangulation evolving under a chord slide.}
\label{fig:chordslide}
\end{center}
\end{figure}

We are primarily interested in trivalent bordered fatgraphs which are dual to triangulations of $S_{g,1}$.  In particular, the graph we obtain as the dual of a fan-like triangulation of a polygon domain takes the form of a \emph{linear chord diagram}.   A linear chord diagram is a trivalent graph immersed in the plane consisting of a segment of the real axis called the \emph{core}, together with line segments lying in the upper half-plane called the \emph{chords}  which are attached to distinct points of the core.  
  Under this duality, the chords of a linear chord diagram embedded in $S_{g,1}$ exactly correspond to pairs of identified sides of a polygon domain of $S_{g,1}$, and the structure of the chord diagram essentially captures the information of how these sides are identified.  

We now depict how the elementary moves of  arc systems look in this dual viewpoint.  Firstly, a diagonal exchange on a triangulation corresponds to a \emph{Whitehead move} on a trivalent bordered fatgraph, which is a move where one non-tail edge is collapsed to a four-valent vertex and then expanded in the opposite direction.  See \cite{abp}.
A triangle CS move, on the other hand, corresponds to a \emph{chord slide} (thus, finally explaining the name of the chord slide groupoid) on a linear chord diagram, which is a move where one endpoint of a chord is slid along a neighboring chord to a new position on the core, as depicted in Figure \ref{fig:chordslide}.  During both of these moves, $\pi$-colorings of the edges of these graphs evolve in natural ways, according to certain ``vertex compatibility'' relations.  See \cite{abp, bene1}.  

For convenience, we finish by listing   diagrammatically  the $T$, $L$, and $R$ relations for linear chord diagrams, where the thin lines represent chords and the thick lines represent segments of the core.

\begin{figure}[!h]
\begin{center}
\includegraphics[width=3.7in]{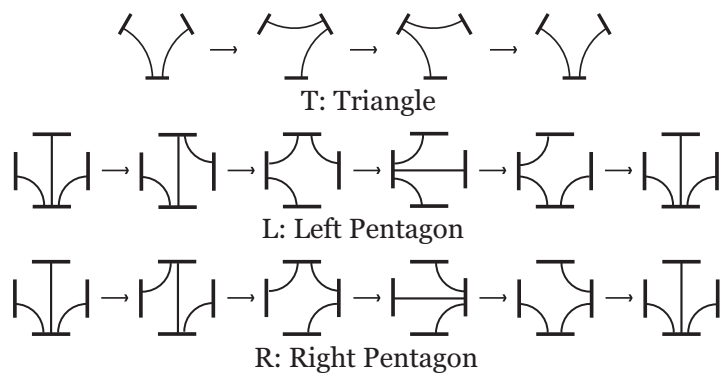}
\caption{$T$, $L$, and $R$ relations for chord diagrams.} 
\label{fig:somerelations}
\end{center}
\end{figure}

\bibliographystyle{amsplain}

\end{document}